\newcommand{\abs}[1]{\left|#1\right|}
\newcommand{\bdry}[1]{\partial #1}
\newcommand{\dint}{\ds{\int}}
\newcommand{\dist}[2]{\text{dist}\, (#1,#2)}
\newcommand{\ds}[1]{\displaystyle #1}
\newcommand{\goodchi}{\protect\raisebox{2pt}{$\chi$}}
\newcommand{\id}[1]{id_{#1}}
\newcommand{\incl}{\subset}
\newcommand{\loc}{\text{loc}}
\newcommand{\norm}[2][]{\left\|#2\right\|_{#1}}
\renewcommand{\O}{\text{O}}
\newcommand{\PS}[1]{$(\text{PS})_{#1}$}
\newcommand{\QED}{\mbox{\qedhere}}
\newcommand{\restr}[2]{\left.#1\right|_{#2}}
\newcommand{\set}[1]{\left\{#1\right\}}
\newcommand{\vol}[1]{\left|#1\right|}
\newcommand{\N}{\mathbb N}
\newcommand{\R}{\mathbb R}
\newcommand{\RP}{\R \text{P}}
\newcommand{\Z}{\mathbb Z}
\newcommand{\A}{{\cal A}}
\newcommand{\F}{{\cal F}}
\newcommand{\M}{{\cal M}}
\DeclareMathOperator{\divg}{div}
\newenvironment{properties}[1]{\begin{enumerate}

}{\end{enumerate}}
\newtheorem{lemma}{Lemma}[section]
\newtheorem{proposition}[lemma]{Proposition}
\newtheorem{theorem}[lemma]{Theorem}
\theoremstyle{remark}
\newtheorem{remark}[lemma]{Remark}
\numberwithin{equation}{section}
\title{\bf $N$-Laplacian problems with critical Trudinger-Moser nonlinearities\thanks{{\em MSC2010:} Primary 35J92, Secondary 35B33, 58E05
\newline \indent\; {\em Key Words and Phrases:} $N$-Laplacian, critical nonlinearity, existence, multiplicity, abstract critical point theory, $\Z_2$-cohomological index, pseudo-index}}
\author{\bf Yang Yang\thanks{This work was completed while the first-named author was visiting the Department of Mathematical Sciences at the Florida Institute of Technology, and she is grateful for the kind hospitality of the department.
\newline \indent\; Project supported by NSFC-Tian Yuan Special Foundation (No. 11226116), Natural Science Foundation of Jiangsu Province of China for Young Scholars (No. BK2012109), and the China Scholarship Council (No. 201208320435).}\\
School of Science\\
Jiangnan University\\
Wuxi, 214122, China\\
[\bigskipamount]
\bf Kanishka Perera\\
Department of Mathematical Sciences\\
Florida Institute of Technology\\
Melbourne, FL 32901, USA}
\date{}
\begin{document}

\maketitle

\begin{abstract}
We prove existence and multiplicity results for a $N$-Laplacian problem with a critical exponential nonlinearity that is a natural analog of the Brezis-Nirenberg problem for the borderline case of the Sobolev inequality. This extends results in the literature for the semilinear case $N = 2$ to all $N \ge 2$. When $N > 2$ the nonlinear operator $- \Delta_N$ has no linear eigenspaces and hence this extension requires new abstract critical point theorems that are not based on linear subspaces. We prove new abstract results based on the $\Z_2$-cohomological index and a related pseudo-index that are applicable here.
\end{abstract}

\newpage

\section{Introduction and main results}

Elliptic problems with critical nonlinearities have been widely studied in the literature. Let $\Omega$ be a bounded domain in $\R^N,\, N \ge 2$. In a celebrated paper \cite{MR709644}, Br{\'e}zis and Nirenberg considered the problem
\begin{equation} \label{1.1}
\left\{\begin{aligned}
- \Delta u & = \lambda u + |u|^{2^\ast - 2}\, u && \text{in } \Omega\\[10pt]
u & = 0 && \text{on } \bdry{\Omega}
\end{aligned}\right.
\end{equation}
when $N \ge 3$, where $2^\ast = 2N/(N - 2)$ is the critical Sobolev exponent. Among other things, they proved that this problem has a positive solution when $N \ge 4$ and $0 < \lambda < \lambda_1$, where $\lambda_1 > 0$ is the first Dirichlet eigenvalue of $- \Delta$ in $\Omega$. Capozzi et al. \cite{MR831041} extended this result by proving the existence of a nontrivial solution when $N = 4$ and $\lambda > \lambda_1$ is not an eigenvalue, and when $N \ge 5$ and $\lambda \ge \lambda_1$. Garc{\'{\i}}a Azorero and Peral Alonso \cite{MR912211}, Egnell \cite{MR956567}, and Guedda and V{\'e}ron \cite{MR1009077} studied the corresponding problem for the $p$-Laplacian
\begin{equation} \label{1.2}
\left\{\begin{aligned}
- \Delta_p\, u & = \lambda\, |u|^{p-2}\, u + |u|^{p^\ast - 2}\, u && \text{in } \Omega\\[10pt]
u & = 0 && \text{on } \bdry{\Omega}
\end{aligned}\right.
\end{equation}
when $N > p > 1$, where $p^\ast = Np/(N - p)$. They proved that this problem has a positive solution when $N \ge p^2$ and $0 < \lambda < \lambda_1(p)$, where $\lambda_1(p) > 0$ is the first Dirichlet eigenvalue of $- \Delta_p$ in $\Omega$. Degiovanni and Lancelotti \cite{MR2514055} extended their result by proving the existence of a nontrivial solution when $N \ge p^2$ and $\lambda > \lambda_1(p)$ is not an eigenvalue, and when $N^2/(N + 1) > p^2$ and $\lambda \ge \lambda_1(p)$ (see also Arioli and Gazzola \cite{MR1741848}).

In the borderline case $N = p \ge 2$, the critical growth is of exponential type and is governed by the Trudinger-Moser inequality
\begin{equation} \label{1.3}
\sup_{u \in W^{1,N}_0(\Omega),\; \norm[N]{\nabla u} \le 1}\, \int_\Omega e^{\, \alpha_N\, |u|^{N'}} dx < \infty,
\end{equation}
where $W^{1,N}_0(\Omega)$ is the usual Sobolev space with the norm $\norm[N]{\nabla u} = \left(\int_\Omega |\nabla u|^N\, dx\right)^{1/N}$, $\alpha_N = N \omega_{N-1}^{1/(N-1)}$, $\omega_{N-1}$ is the area of the unit sphere in $\R^N$, and $N' = N/(N - 1)$ (see Trudinger \cite{MR0216286} and Moser \cite{MR0301504}). A natural analog of problem \eqref{1.2} for this case is
\begin{equation} \label{1.4}
\left\{\begin{aligned}
- \Delta_N\, u & = \lambda\, |u|^{N-2}\, ue^{\, |u|^{N'}} && \text{in } \Omega\\[10pt]
u & = 0 && \text{on } \bdry{\Omega},
\end{aligned}\right.
\end{equation}
where $\Delta_N\, u = \divg \left(|\nabla u|^{N-2}\, \nabla u\right)$ is the $N$-Laplacian of $u$. A result of Adimurthi \cite{MR1079983} implies that this problem has a nonnegative and nontrivial solution when $0 < \lambda < \lambda_1(N)$, where $\lambda_1(N) > 0$ is the first Dirichlet eigenvalue of $- \Delta_N$ in $\Omega$ (see also do {\'O} \cite{MR1392090}). Theorem 1.4 in de Figueiredo et al. \cite{MR1386960,MR1399846} implies that the semilinear problem
\begin{equation} \label{1.5}
\left\{\begin{aligned}
- \Delta u & = \lambda ue^{\, u^2} && \text{in } \Omega\\[10pt]
u & = 0 && \text{on } \bdry{\Omega}
\end{aligned}\right.
\end{equation}
has a nontrivial solution when $N = 2$ and $\lambda \ge \lambda_1$. In the present paper we first prove the existence of a nontrivial solution of problem \eqref{1.4} when $N \ge 3$ and $\lambda > \lambda_1(N)$ is not an eigenvalue. We have the following theorem.

\begin{theorem} \label{Theorem 1.1}
If $\lambda > 0$ is not a Dirichlet eigenvalue of $- \Delta_N$ in $\Omega$, then problem \eqref{1.4} has a nontrivial solution.
\end{theorem}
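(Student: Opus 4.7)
The plan is variational. Define
$$I_\lambda(u) = \frac{1}{N}\int_\Omega |\nabla u|^N\, dx - \lambda \int_\Omega F(u)\, dx, \quad u \in W^{1,N}_0(\Omega),$$
where $F(t) = \int_0^t |s|^{N-2} s\, e^{|s|^{N'}}\, ds$; by \eqref{1.3}, $I_\lambda \in C^1(W^{1,N}_0(\Omega), \R)$ and its critical points are the weak solutions of \eqref{1.4}. The first step is a local compactness statement: $I_\lambda$ satisfies \PS{c} for every $c$ strictly below a threshold $c^*$ of order $\alpha_N^{N-1}/N$. Boundedness of a \PS{c} sequence follows from an Ambrosetti-Rabinowitz-type superhomogeneity of the integrand $|t|^{N-2} t\, e^{|t|^{N'}}$, and standard concentration-compactness together with \eqref{1.3} rules out loss of compactness below $c^*$.

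For $0 < \lambda < \lambda_1(N)$ the conclusion is already contained in Adimurthi \cite{MR1079983} via a mountain-pass argument, so the novel case is $\lambda > \lambda_1(N)$ with $\lambda$ in a spectral gap. Because $- \Delta_N$ has no linear eigenspaces when $N > 2$, the classical linear-subspace linking is unavailable. Instead, I would use the variational eigenvalue sequence $\seq{\lambda_k(N)}$ defined in terms of the $\Z_2$-cohomological index of the sublevel sets of the Dirichlet $N$-energy on $\set{u : \norm[N]{u} = 1}$, choose $k$ with $\lambda_k(N) < \lambda < \lambda_{k+1}(N)$, and invoke the abstract pseudo-index theorem announced in the abstract. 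This produces a min-max critical value $c$ of $I_\lambda$ bounded above by $\sup_B I_\lambda$, where $B$ is obtained by augmenting a cohomological-index-$k$ subset $A$ of the unit sphere with a one-parameter family of Moser functions $m_n$ concentrating at an interior point of $\Omega$, along which $I_\lambda \to -\infty$.

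The main obstacle is establishing the strict inequality $0 < c < c^*$, needed both to apply \PS{c} and to ensure $c \neq I_\lambda(0)$. The lower bound $c > 0$ will come from the spectral characterization of $\lambda_{k+1}(N)$ via the index, combined with the local behavior of $I_\lambda$ near the origin. The upper bound $c < c^*$ is the delicate point: along the pure concentration direction $t\, m_n$ with $t$ near the critical scale one must show that $\int_\Omega F(t\, m_n)$ dominates $t^N / N$ by a quantitative amount, and the cross contributions between $m_n$ and the index-$k$ component have to be absorbed into the positive gap $\lambda - \lambda_k(N) > 0$. Compared with the semilinear case $N = 2$ treated in \cite{MR1386960,MR1399846}, two new difficulties arise: the decomposition cannot split the space into linear summands and must instead respect the cohomological index intrinsically, and the primitive $F$ is an incomplete-gamma-type expression rather than the clean $(e^{u^2} - 1)/2$, forcing a sharpened lower bound of the form $F(t) \gtrsim t^{-1/(N-1)}\, e^{t^{N'}}$ for $t$ large in order to harvest enough exponential mass on the Moser peak. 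Once $0 < c < c^*$ is secured, \PS{c} produces a nontrivial critical point, which by the standard regularity theory for the $N$-Laplacian is a genuine solution of \eqref{1.4}.
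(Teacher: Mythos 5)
Your overall strategy matches the paper's: work with the energy functional, use the index-based eigenvalue sequence $\lambda_k(N)$ and the bracket $\lambda_k(N) < \lambda < \lambda_{k+1}(N)$, link a set of index $k$ against the superlevel set $\Psi_{\lambda_{k+1}(N)}$, extend the index-$k$ set along Moser functions so the functional tends to $-\infty$, and close with the local Palais--Smale condition at levels below $\alpha_N^{N-1}/N$. Two points, however, need correcting. First, the abstract tool for this theorem is a \emph{linking} theorem based on the cohomological index and the piercing property (Theorem~\ref{Theorem 2.2}), not the pseudo-index machinery; the pseudo-index theorem (Theorem~\ref{Theorem 2.4}) is reserved for the multiplicity result of Theorem~\ref{Theorem 1.2}. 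Second, and more importantly, you identify the cross contributions between the index-$k$ component and the Moser peak as the delicate point but propose to ``absorb them into the gap $\lambda - \lambda_k(N)$''. This is precisely the step that does not go through as stated, and it is where the paper introduces the key new idea: by Degiovanni--Lancelotti one may choose the index-$k$ set $C \subset \Psi^{\lambda_k(N)}$ to be \emph{compact and bounded in $L^\infty(\Omega) \cap C^{1,\alpha}_{\loc}(\Omega)$}, and one then multiplies by a cutoff $\eta_m$ vanishing on a small ball $B_{r_m}(0)$ to obtain $C_m$ with $i(C_m) = k$ still, $\Psi \le \lambda$ on $C_m$, and every $u \in C_m$ vanishing on $B_{r_m}(0)$. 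Taking the Moser functions $\widetilde{v}_j$ scaled to be supported in $B_{r_m}(0)$, the functional then \emph{splits exactly}, $\Phi(su + t\widetilde{v}_j) = \Phi(su) + \Phi(t\widetilde{v}_j)$, so no cross terms appear at all. Without this disjoint-support device you would have to estimate genuinely nonlinear interaction terms coming from $F$, and there is no reason a gap of size $\lambda - \lambda_k(N)$ would dominate them. Also note that the $O(1/m^{N-1})$ estimates needed to verify $i(C_m) = k$ and $\Psi \le \lambda$ on $C_m$ require the $C^{1,\alpha}_{\loc}$-boundedness of $C$; the raw sublevel set $\Psi^{\lambda_k(N)}$ is neither compact nor amenable to such pointwise cutoff estimates, so using the Degiovanni--Lancelotti compact representative is essential and not merely a convenience. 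Finally, the sharpened lower bound $F(t) \gtrsim t^{-1/(N-1)} e^{t^{N'}}$ you propose is not needed: once the functional splits, the paper uses only $F \ge 0$ plus the Euler equation along the Moser ray to derive a contradiction from the blowup of $\int v_j^N e^{\alpha_N v_j^{N'}}$.
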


This extension to the quasilinear case is nontrivial. Indeed, the linking argument based on eigenspaces of $- \Delta$ in de Figueiredo et al. \cite{MR1386960,MR1399846} does not work when $N \ge 3$ since the nonlinear operator $- \Delta_N$ does not have linear eigenspaces. We will use a more general construction based on sublevel sets as in Perera and Szulkin \cite{MR2153141} (see also Perera et al.\! \cite[Proposition 3.23]{MR2640827}). Moreover, the standard sequence of eigenvalues of $- \Delta_N$ based on the genus does not give enough information about the structure of the sublevel sets to carry out this linking construction. Therefore we will use a different sequence of eigenvalues introduced in Perera \cite{MR1998432} that is based on a cohomological index, and show that problem \eqref{1.4} has a nontrivial solution if $\lambda > 0$ is not an eigenvalue from this particular sequence.

The $\Z_2$-cohomological index of Fadell and Rabinowitz \cite{MR57:17677} is defined as follows. Let $W$ be a Banach space and let $\A$ denote the class of symmetric subsets of $W \setminus \set{0}$. For $A \in \A$, let $\overline{A} = A/\Z_2$ be the quotient space of $A$ with each $u$ and $-u$ identified, let $f : \overline{A} \to \RP^\infty$ be the classifying map of $\overline{A}$, and let $f^\ast : H^\ast(\RP^\infty) \to H^\ast(\overline{A})$ be the induced homomorphism of the Alexander-Spanier cohomology rings. The cohomological index of $A$ is defined by
\[
i(A) = \begin{cases}
\sup \set{m \ge 1 : f^\ast(\omega^{m-1}) \ne 0}, & A \ne \emptyset\\[5pt]
0, & A = \emptyset,
\end{cases}
\]
where $\omega \in H^1(\RP^\infty)$ is the generator of the polynomial ring $H^\ast(\RP^\infty) = \Z_2[\omega]$. For example, the classifying map of the unit sphere $S^{m-1}$ in $\R^m,\, m \ge 1$ is the inclusion $\RP^{m-1} \incl \RP^\infty$, which induces isomorphisms on $H^q$ for $q \le m - 1$, so $i(S^{m-1}) = m$.

The Dirichlet spectrum of $- \Delta_N$ in $\Omega$ consists of those $\lambda \in \R$ for which the problem
\begin{equation} \label{1.6}
\left\{\begin{aligned}
- \Delta_N\, u & = \lambda\, |u|^{N-2}\, u && \text{in } \Omega\\[10pt]
u & = 0 && \text{on } \bdry{\Omega}
\end{aligned}\right.
\end{equation}
has a nontrivial solution. Although a complete description of the spectrum is not yet known when $N \ge 3$, we can define an increasing and unbounded sequence of eigenvalues via a suitable minimax scheme. The standard scheme based on the genus does not give the index information necessary to prove Theorem \ref{Theorem 1.1}, so we will use the following scheme based on the cohomological index as in Perera \cite{MR1998432}. Let
\begin{equation} \label{1.9}
\Psi(u) = \frac{1}{\dint_\Omega |u|^N\, dx}, \quad u \in \M = \set{u \in W^{1,N}_0(\Omega) : \int_\Omega |\nabla u|^N\, dx = 1}.
\end{equation}
Then eigenvalues of problem \eqref{1.6} on $\M$ coincide with critical values of $\Psi$. We use the standard notation
\[
\Psi^a = \set{u \in \M : \Psi(u) \le a}, \quad \Psi_a = \set{u \in \M : \Psi(u) \ge a}, \quad a \in \R
\]
for the sublevel sets and superlevel sets, respectively. Let $\F$ denote the class of symmetric subsets of $\M$ and set
\[
\lambda_k(N) := \inf_{M \in \F,\; i(M) \ge k}\, \sup_{u \in M}\, \Psi(u), \quad k \in \N.
\]
Then $0 < \lambda_1(N) < \lambda_2(N) \le \lambda_3(N) \le \cdots \to + \infty$ is a sequence of eigenvalues of problem \eqref{1.6} and
\begin{equation} \label{1.7}
\lambda_k(N) < \lambda_{k+1}(N) \implies i(\Psi^{\lambda_k(N)}) = i(\M \setminus \Psi_{\lambda_{k+1}(N)}) = k
\end{equation}
(see Perera et al.\! \cite[Propositions 3.52 and 3.53]{MR2640827}). Proof of Theorem \ref{Theorem 1.1} will make essential use of \eqref{1.7}.

Now we turn to the question of multiplicity of solutions to problem \eqref{1.4}. Let $0 < \lambda_1 < \lambda_2 \le \lambda_3 \le \cdots \to + \infty$ be the Dirichlet eigenvalues of $- \Delta$ in $\Omega$, repeated according to multiplicity, let
\[
S = \inf_{u \in H^1_0(\Omega) \setminus \set{0}}\, \frac{\norm[2]{\nabla u}^2}{\norm[2^\ast]{u}^2}
\]
be the best constant for the Sobolev imbedding $H^1_0(\Omega) \hookrightarrow L^{2^\ast}(\Omega)$ when $N \ge 3$, and let $\vol{\cdot}$ denote the Lebesgue measure in $\R^N$. Cerami et al.\! \cite{MR779872} proved that if $\lambda_k \le \lambda < \lambda_{k+1}$ and
\[
\lambda > \lambda_{k+1} - \frac{S}{\vol{\Omega}^{2/N}},
\]
and $m$ denotes the multiplicity of $\lambda_{k+1}$, then problem \eqref{1.1} has $m$ distinct pairs of nontrivial solutions $\pm\, u^\lambda_j,\, j = 1,\dots,m$ such that $u^\lambda_j \to 0$ as $\lambda \nearrow \lambda_{k+1}$. A result of Adimurthi and Yadava \cite{MR1044289} implies that there exists a constant $\mu_k \in [\lambda_k,\lambda_{k+1})$ such that if $\mu_k < \lambda < \lambda_{k+1}$, then the same conclusion holds for problem \eqref{1.5} when $N = 2$. We prove a similar bifurcation result for problem \eqref{1.4} when $N \ge 3$. We have the following theorem.

\begin{theorem} \label{Theorem 1.2}
If $N \ge 3$, $\lambda_k(N) < \lambda < \lambda_{k+1}(N) = \cdots = \lambda_{k+m}(N)$ for some $k, m \in \N$, and
\begin{equation} \label{1.8}
\lambda > \lambda_{k+1}(N) - \left(\frac{N \alpha_N^{N-1}}{\vol{\Omega}}\right)^{1/N}\! \lambda_k(N)^{1/N'},
\end{equation}
then problem \eqref{1.4} has $m$ distinct pairs of nontrivial solutions $\pm\, u^\lambda_j,\, j = 1,\dots,m$ such that $u^\lambda_j \to 0$ as $\lambda \nearrow \lambda_{k+1}(N)$.
\end{theorem}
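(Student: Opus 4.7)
The plan is to obtain solutions of \eqref{1.4} as nontrivial critical points of the $C^1$ functional
$$
I_\lambda(u) = \frac{1}{N} \int_\Omega |\nabla u|^N\, dx - \lambda \int_\Omega F(u)\, dx, \qquad u \in W^{1,N}_0(\Omega),
$$
where $F'(s) = |s|^{N-2}\, s\, e^{|s|^{N'}}$ and $F(0) = 0$, and to produce the $m$ pairs by applying the abstract cohomological-index pseudo-index multiplicity theorem developed elsewhere in the paper. Since $F(s) = |s|^N/N + \O(|s|^{N + N'})$ near $s = 0$, the ``quadratic'' part of $I_\lambda$ is precisely $(\norm[N]{\nabla u}^N - \lambda \norm[N]{u}^N)/N$, so the spectrum $\set{\lambda_k(N)}$ organizes the linking. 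Three ingredients drive the argument: a Palais--Smale analysis tailored to exponential critical growth; the sublevel-set linking supplied by \eqref{1.7}; and a Moser concentration sequence that certifies the key energy threshold.

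First, following the Trudinger--Moser concentration-compactness tradition of Adimurthi \cite{MR1079983}, do \'O \cite{MR1392090}, and de Figueiredo et al.\ \cite{MR1386960,MR1399846}, I would show that $I_\lambda$ satisfies \PS{c} for every $c < c^\ast$, where $c^\ast$ is an explicit universal threshold of Trudinger--Moser type of the form $\alpha_N^{N-1}/N$ (the analog of $S^{N/2}/N$ for the classical Br\'ezis--Nirenberg problem). Next, from $\lambda_k(N) < \lambda$ and \eqref{1.7} I obtain a symmetric compact set $A_0 \subset \Psi^{\lambda_k(N)}$ with $i(A_0) = k$, while from $\lambda_{k+1}(N) = \cdots = \lambda_{k+m}(N) < \lambda_{k+m+1}(N)$ and \eqref{1.7} I also get $i(\M \setminus \Psi_{\lambda_{k+m+1}(N)}) = k + m$. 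The dilation $u \mapsto R u$ sends $A_0$ to a symmetric subset of a radius-$R$ sphere in $W^{1,N}_0(\Omega)$ on which $I_\lambda \le 0$ for $R$ large, and pairing it with $\M \setminus \Psi_{\lambda_{k+m+1}(N)}$ through the pseudo-index machinery yields minimax values
$$
c_j = \inf_{A \in \F_j^\ast}\; \sup_{u \in A}\; I_\lambda(u), \qquad j = 1, \ldots, m,
$$
in appropriate pseudo-index classes $\F_j^\ast$. The abstract multiplicity theorem then declares each $c_j$ a critical value provided \PS{c_j} holds; distinct values give distinct pairs, while coincident values $c_j = c_{j+1} = \cdots$ force the critical set at that level to have positive cohomological index and hence to be infinite, so at least $m$ pairs $\pm u_j^\lambda$ are produced.

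The central obstacle is to verify $c_m < c^\ast$, and this is precisely what \eqref{1.8} is designed for. I would take the standard Moser concentration sequence $w_\eps$ supported in a small ball $B_\eps \subset \Omega$ with $\norm[N]{\nabla w_\eps} = 1$ and $\int_\Omega e^{\alpha_N |w_\eps|^{N'}}\, dx$ approaching the Trudinger--Moser supremum, and assemble a symmetric test family of the form $A_\eps = \set{t u + s w_\eps : u \in R A_0,\; t, s \ge 0,\; t^N + s^N \le 1}$, which an index-sum inequality places in $\F_m^\ast$. On $A_\eps$ the exponential contribution from $w_\eps$ saturates $c^\ast$ up to an $\o(1)$ error, the cross-interaction is controlled by $\vol{B_\eps} \to 0$, and the lower-order term $-\lambda \norm[N]{u}^N/N$ from $u \in R A_0$ (whose $L^N$-norm is bounded below by $R\, \lambda_k(N)^{-1/N}$, since $\sup_{A_0} \Psi \le \lambda_k(N)$) produces a strictly negative correction of exactly the magnitude $(N \alpha_N^{N-1}/\vol{\Omega})^{1/N}\, \lambda_k(N)^{1/N'}$ needed to beat the Moser loss; assembling the estimates yields $\sup_{A_\eps} I_\lambda < c^\ast$ whenever \eqref{1.8} holds. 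Finally, the smallness $u_j^\lambda \to 0$ as $\lambda \nearrow \lambda_{k+1}(N)$ is obtained by contradiction: a nonzero weak limit along $\lambda_n \nearrow \lambda_{k+1}(N)$ would solve \eqref{1.6} at $\lambda_{k+1}(N)$, but an index count combining \eqref{1.7} with the strict bound $c_j < c^\ast$ excludes this. The main technical hurdle throughout is that $A_0$ is not a linear eigenspace, so all estimates involving the ``ground state" direction must be extracted from the index inequality $i(A_0) = k$ together with $\sup_{A_0} \Psi \le \lambda_k(N)$, rather than from orthogonality in a spectral decomposition.
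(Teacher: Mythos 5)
The main structural problem is that your index assignments are backwards for the pseudo-index multiplicity scheme. Theorem \ref{Theorem 2.4} requires the low set $A_0$ (on which $\Phi \le 0$ after dilation) to have \emph{large} index, $i(A_0) \ge k+m$, and the constraint set $B_0$ to satisfy $i(S \setminus B_0) \le k$; the gap of size $m$ between these two indices is what produces the $m$ minimax levels $c_{k+1}^\ast, \ldots, c_{k+m}^\ast$. You instead take $A_0 \subset \Psi^{\lambda_k(N)}$ with $i(A_0) = k$ and pair it with $\M \setminus \Psi_{\lambda_{k+m+1}(N)}$ of index $k+m$. With that choice the cone $X$ over $R A_0$ has pseudo-index at most $k$, so the classes $\A_j^\ast$ needed for $j = k+1, \ldots, k+m$ are empty, and the lower bound $c_j^\ast \ge \inf\Phi(B) > 0$ would only become available for $j > k+m$; the theorem then yields nothing. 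The paper correctly takes $A_0$ to be a compact symmetric subset of $\Psi^{\lambda_{k+m}(N)}$ of index $k+m$ (from Degiovanni--Lancelotti) and $B_0 = \Psi_{\lambda_{k+1}(N)}$, so that $i(S \setminus B_0) = k$ by \eqref{1.7}.

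The second problem is the energy estimate $\sup\Phi(X) < \alpha_N^{N-1}/N$. You propose adding a Moser bubble $w_\eps$ to $R A_0$; besides the index difficulty above, a single bubble can raise the pseudo-index by at most one, and the sketched interaction estimate does not obviously reproduce the precise constant in \eqref{1.8}. In fact no Moser functions are needed here at all: the paper uses only $e^t \ge 1 + t$ to get $F(t) \ge |t|^N/N + |t|^{N+N'}/(N+N')$, the inclusion $A_0 \subset \Psi^{\lambda_{k+1}(N)}$ to get $\norm[N]{\nabla u}^N \le \lambda_{k+1}(N)\norm[N]{u}^N$ for $u \in X$, H\"older to bound $\int_\Omega |u|^{N+N'}\,dx$ from below by $(\int_\Omega |u|^N\,dx)^{N'}/\vol{\Omega}^{1/(N-1)}$, and then a one-variable maximization in $\rho = \int_\Omega|u|^N\,dx$ whose maximum value lies below $\alpha_N^{N-1}/N$ precisely when \eqref{1.8} holds. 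Your final claim that $u_j^\lambda \to 0$ is also unsound as stated: a nonzero weak limit of $u_j^{\nu_n}$ along $\nu_n \nearrow \lambda_{k+1}(N)$ would, under favourable compactness, solve \eqref{1.4} at $\lambda = \lambda_{k+1}(N)$, not the eigenvalue problem \eqref{1.6}, so there is no spectral contradiction. The paper instead proves $\norm{v_n} \to 0$ directly from $\Phi(v_n) \to 0$ and $\Phi'(v_n)v_n = 0$ using the elementary inequalities \eqref{4.2}--\eqref{4.3} together with a splitting of $\int_\Omega|v_n|^{N-N'}e^{|v_n|^{N'}}\,dx$ and dominated convergence.
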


The abstract result of Bartolo et al. \cite{MR713209} used in Cerami et al.\! \cite{MR779872} and Adimurthi and Yadava \cite{MR1044289} is based on linear subspaces and therefore cannot be used to prove Theorem \ref{Theorem 1.2}. We will prove a more general critical point theorem based on a pseudo-index related to the cohomological index that is applicable here (see also Perera et al.\! \cite[Proposition 3.44]{MR2640827}).

In closing the introduction we remark that we have confined ourselves to the model problem \eqref{1.4} only for the sake of simplicity. The methods developed in this paper can easily be adapted to treat nonlinearities more general than $|u|^{N-2}\, ue^{\, |u|^{N'}}$ as in Adimurthi \cite{MR1079983}, Adimurthi and Yadava \cite{MR1044289}, de Figueiredo et al. \cite{MR1386960,MR1399846}, and do {\'O} \cite{MR1392090}.

\section{Abstract critical point theorems}

In this section we prove two abstract critical point theorems based on the cohomological index that we will use to prove Theorems \ref{Theorem 1.1} and \ref{Theorem 1.2}. The following proposition summarizes the basic properties of the cohomological index.

\begin{proposition}[Fadell-Rabinowitz \cite{MR57:17677}] \label{Proposition 2.1}
The index $i : \A \to \N \cup \set{0,\infty}$ has the following properties:
\begin{properties}{i}
\item Definiteness: $i(A) = 0$ if and only if $A = \emptyset$;
\item \label{i2} Monotonicity: If there is an odd continuous map from $A$ to $B$ (in particular, if $A \subset B$), then $i(A) \le i(B)$. Thus, equality holds when the map is an odd homeomorphism;
\item Dimension: $i(A) \le \dim W$;
\item Continuity: If $A$ is closed, then there is a closed neighborhood $N \in \A$ of $A$ such that $i(N) = i(A)$. When $A$ is compact, $N$ may be chosen to be a $\delta$-neighborhood $N_\delta(A) = \set{u \in W : \dist{u}{A} \le \delta}$;
\item Subadditivity: If $A$ and $B$ are closed, then $i(A \cup B) \le i(A) + i(B)$;
\item \label{i6} Stability: If $SA$ is the suspension of $A \ne \emptyset$, obtained as the quotient space of $A \times [-1,1]$ with $A \times \set{1}$ and $A \times \set{-1}$ collapsed to different points, then $i(SA) = i(A) + 1$;
\item \label{i7} Piercing property: If $A$, $A_0$ and $A_1$ are closed, and $\varphi : A \times [0,1] \to A_0 \cup A_1$ is a continuous map such that $\varphi(-u,t) = - \varphi(u,t)$ for all $(u,t) \in A \times [0,1]$, $\varphi(A \times [0,1])$ is closed, $\varphi(A \times \set{0}) \subset A_0$ and $\varphi(A \times \set{1}) \subset A_1$, then $i(\varphi(A \times [0,1]) \cap A_0 \cap A_1) \ge i(A)$;
\item Neighborhood of zero: If $U$ is a bounded closed symmetric neighborhood of $0$, then $i(\bdry{U}) = \dim W$.
\end{properties}
\end{proposition}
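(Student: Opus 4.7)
The plan is not to reprove these eight properties from scratch, since they are the content of the Fadell--Rabinowitz paper \cite{MR57:17677}; the natural approach is simply to invoke that reference and, where useful, translate their arguments into the Alexander--Spanier language used above. That said, I would organize a self-contained sketch around the single observation that every claim is a statement about the ring map $f^\ast\colon H^\ast(\RP^\infty) \to H^\ast(\overline{A})$ induced by the classifying map of the free $\Z_2$-space $A$; once this is in place, each property reduces to a standard functoriality or comparison argument for Alexander--Spanier cohomology together with one homotopy-theoretic input about $\RP^\infty$.

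The straightforward properties I would dispatch first. Definiteness $(i_1)$ is immediate from the definition since $\omega^0 = 1$ pulls back to $1 \ne 0$ whenever $\overline{A}$ is nonempty. Monotonicity $(i_2)$ follows because an odd continuous map $A \to B$ descends to $\overline{A} \to \overline{B}$ and the composition with the classifying map of $\overline{B}$ classifies $\overline{A}$, so the two pullbacks factor. Dimension $(i_3)$: a finite-dimensional symmetric subset of $W \setminus \set{0}$ admits an odd map into the unit sphere $S^{\dim W - 1}$, whose classifying map factors through $\RP^{\dim W - 1}$, killing $\omega^{\dim W}$. Continuity $(i_4)$ uses that $\RP^\infty$ is an ANR, so the classifying map extends to a symmetric open neighborhood; for compact $A$, the extension survives on some $\delta$-neighborhood by uniform continuity, giving $i(N_\delta(A)) \le i(A)$, and the reverse inequality is monotonicity.

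The remaining four properties are the real content. Subadditivity $(i_5)$ I would derive from the relative Mayer--Vietoris/cup-product argument: if $i(A) = p$ and $i(B) = q$, then $\omega^p$ restricts to zero on $\overline{A}$ and $\omega^q$ on $\overline{B}$, so by the long exact sequences of the pairs $(\overline{A \cup B}, \overline{A})$ and $(\overline{A \cup B}, \overline{B})$ each lifts to a relative class, and the cup product $\omega^p \cup \omega^q = \omega^{p+q}$ lifts to $H^{p+q}(\overline{A \cup B}, \overline{A \cup B}) = 0$. Stability $(i_6)$ comes from writing $\overline{SA} \simeq \Sigma \overline{A}$ (a $\Z_2$-equivariant suspension) and using that the Borel construction converts suspension into multiplication by $\omega$. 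Neighborhood of zero $(i_8)$ follows because $\bdry U$ admits an odd homeomorphism onto the unit sphere of $W$ via radial projection, so its index equals $\dim W$ by $(i_3)$ and $(i_2)$.

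The step I expect to be the main obstacle is the piercing property $(i_7)$, which is the technical heart of the index theory and the one property that does not reduce to pure naturality. The plan there is as follows: write $C = \varphi(A \times [0,1])$, $C_j = \varphi(A \times \set{t : t \text{ near } j}) \cap A_j$ for $j = 0, 1$, and $D = C \cap A_0 \cap A_1$. Consider the cover of $C$ by the two symmetric closed halves $C \cap A_0$ and $C \cap A_1$, whose intersection contains $D$. A Mayer--Vietoris argument for the Borel construction of this cover shows that the pullback of $\omega$ to $\overline{C \cap A_j}$ must bear the missing cohomology on $\overline{D}$; tracing the connecting homomorphism and comparing with the pullback of $\omega^{i(A)-1}$ through $\overline{\varphi}$ (which is nonzero on $\overline{A}$ by hypothesis) forces $\omega^{i(A)-1} \ne 0$ on $\overline{D}$, i.e.\ $i(D) \ge i(A)$. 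All of the closedness hypotheses on $\varphi(A \times [0,1])$ and on $A_0, A_1$ are exactly what is needed for the Alexander--Spanier Mayer--Vietoris sequence to apply on the relevant closed pairs.
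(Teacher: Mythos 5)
The paper itself gives no proof of Proposition \ref{Proposition 2.1}: it is stated as a citation to Fadell and Rabinowitz \cite{MR57:17677}, exactly as you propose at the outset. So your top-level strategy — invoke the reference — coincides with what the authors do.

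Since you went on to sketch the arguments, two of them have genuine errors worth flagging. For stability $(i_6)$, the claim $\overline{SA} \simeq \Sigma\overline{A}$ is false: the $\Z_2$-action on $SA$ is $(u,t) \mapsto (-u,-t)$, which swaps the two cone points, so the quotient is \emph{not} the suspension of $\overline{A}$. (Test case: $A = S^0$ gives $SA = S^1$ with antipodal action, so $\overline{SA} = \RP^1 \cong S^1$, whereas $\Sigma\overline{A} = \Sigma(\text{pt})$ is a contractible interval.) The correct route is to recognize $SA$ as the equivariant join $A * S^0$ and use the join additivity $i(A * B) = i(A) + i(B)$, or equivalently to realize $E\Z_2 \times_{\Z_2} SA$ as the Thom space of a line bundle over $E\Z_2 \times_{\Z_2} A$ and apply the Thom isomorphism; either way one gets the $+1$, but not via a naive suspension of quotients. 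For neighborhood of zero $(i_8)$, radial projection $\bdry U \to S$ is odd and continuous but generally \emph{not} a homeomorphism — a ray from the origin can meet $\bdry U$ in more than one point when $U$ is not star-shaped — so you only get $i(\bdry U) \le \dim W$ from monotonicity and dimension. The lower bound $i(\bdry U) \ge \dim W$ requires a separate argument, typically the piercing property $(i_7)$ applied to the odd homotopy $\varphi(u,t) = ((1-t)\rho + tR)\,u$ on $S \times [0,1]$ with $S_\rho \subset \interior{U}$ and $S_R \cap \closure{U} = \emptyset$, so that the image must pierce $\bdry U$ with full index. Your sketches for definiteness, monotonicity, dimension, continuity, and subadditivity are standard and essentially correct, and your outline for piercing is vague but pointed in the right direction; none of this affects the paper, which simply cites the result.
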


Let
\[
S = \set{u \in W : \norm{u} = 1}
\]
be the unit sphere in $W$ and let
\[
\pi : W \setminus \set{0} \to S, \quad u \mapsto \frac{u}{\norm{u}}
\]
be the radial projection onto $S$. The following abstract result generalizes the linking theorem of Rabinowitz \cite{MR0488128}.

\begin{theorem} \label{Theorem 2.2}
Let $\Phi$ be a $C^1$-functional on $W$ and let $A_0,\, B_0$ be disjoint nonempty closed symmetric subsets of $S$ such that
\begin{equation} \label{2.1}
i(A_0) = i(S \setminus B_0) < \infty.
\end{equation}
Assume that there exist $R > r > 0$ and $v \in S \setminus A_0$ such that
\[
\sup \Phi(A) \le \inf \Phi(B), \qquad \sup \Phi(X) < \infty,
\]
where
\begin{gather*}
A = \set{tu : u \in A_0,\, 0 \le t \le R} \cup \set{R\, \pi((1 - t)\, u + tv) : u \in A_0,\, 0 \le t \le 1},\\[5pt]
B = \set{ru : u \in B_0},\\[5pt]
X = \set{tu : u \in A,\, \norm{u} = R,\, 0 \le t \le 1}.
\end{gather*}
Let $\Gamma = \set{\gamma \in C(X,W) : \gamma(X) \text{ is closed and} \restr{\gamma}{A} = \id{A}}$ and set
\[
c := \inf_{\gamma \in \Gamma}\, \sup_{u \in \gamma(X)}\, \Phi(u).
\]
Then
\begin{equation} \label{2.2}
\inf \Phi(B) \le c \le \sup \Phi(X),
\end{equation}
in particular, $c$ is finite. If, in addition, $\Phi$ satisfies the {\em \PS{c}} condition, then $c$ is a critical value of $\Phi$.
\end{theorem}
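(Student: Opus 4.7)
The plan splits into three parts: an immediate upper bound, a linking argument for the lower bound, and a standard deformation step for criticality.

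The upper bound is trivial because $\id{X} \in \Gamma$, giving $c \le \sup \Phi(X) < \infty$. For the lower bound $c \ge \inf \Phi(B)$, I would reduce the claim to the intersection lemma
\[
\gamma(X) \cap B \neq \emptyset \quad \text{for every } \gamma \in \Gamma;
\]
once this is in hand, $\sup \Phi(\gamma(X)) \ge \inf \Phi(B)$ for every admissible $\gamma$, and the infimum over $\Gamma$ gives the result. The key topological fact is that $A = A_1 \cup A_2$ where $A_1 = \set{tu : u \in A_0,\, 0 \le t \le R}$ is the cone on $A_0$ with apex $0$, $A_2 = \set{R\pi((1-t)u + tv) : u \in A_0,\, 0 \le t \le 1}$ is the cone on $A_0$ with apex $Rv$ (the paths through $v$ all terminating at $Rv$), and $A_1 \cap A_2 = RA_0$. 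Thus $A$ carries the homotopy type of the suspension of $A_0$, and by the stability property $(i_6)$ a suitable symmetric hull has index $i(A_0) + 1 = i(S \setminus B_0) + 1$, one larger than the threshold supplied by the hypothesis \eqref{2.1}.

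To prove the intersection, I would argue by contradiction: suppose $\gamma \in \Gamma$ satisfies $\gamma(X) \cap B = \emptyset$. Every point of $X$ has the form $t R \pi((1-s)u + sv)$ for $(u, s, t) \in A_0 \times [0,1]^2$, so I would construct an odd continuous map $\varphi : A_0 \times [0,1] \to W$ by pulling a symmetrically chosen one-parameter family of radial segments in $X$ through $\gamma$ and gluing in the reflected family over $-A_0$, arranged so that $\norm{\varphi(\cdot, 0)} \le r$ (using $\gamma(0) = 0$) and $\norm{\varphi(\cdot, 1)} \ge r$ (using $\restr{\gamma}{A} = \id{A}$ together with $R > r$). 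The piercing property $(i_7)$, applied with the closed symmetric sets $A_0' = \set{u \in W : \norm{u} \le r}$ and $A_1' = \set{u \in W : \norm{u} \ge r}$, then yields
\[
i\!\left(\varphi(A_0 \times [0,1]) \cap rS\right) \ge i(A_0),
\]
and the odd homeomorphism $y \mapsto y/r$ of $rS$ onto $S$, together with monotonicity $(i_2)$ and the extra suspension dimension obtained from $v$, delivers a symmetric subset of $S$ of index at least $i(S \setminus B_0) + 1$. This subset cannot sit inside $S \setminus B_0$, so it must meet $B_0$; pulling back produces a point of $\gamma(X) \cap B$, the required contradiction.

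For criticality, if $\PS{c}$ holds but $c$ is a regular value, the standard deformation lemma supplies $\eps > 0$ and a continuous $\eta : W \to W$ fixing $\Phi^{c - 2\eps}$ (which contains a neighborhood of $A$, since $\sup \Phi(A) \le \inf \Phi(B) \le c$) and sending $\set{u \in W : \Phi(u) \le c + \eps}$ into $\set{u \in W : \Phi(u) \le c - \eps}$; then $\eta \circ \gamma \in \Gamma$ for any near-optimal $\gamma$, contradicting the definition of $c$. The principal obstacle is the construction of $\varphi$: the set $A_2$ — and hence $X$ itself — fails to be $\Z_2$-symmetric because of the distinguished point $v$, so one must extend $\gamma$ oddly to a symmetric superset (such as $X \cup (-X)$) and verify that the piercing detects the full suspension index $i(A_0) + 1$ rather than merely $i(A_0)$; the hypothesis \eqref{2.1} would otherwise be insufficient to force the intersection. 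Carefully tracking this extra dimension through the construction is the crux of the argument (cf.\ \cite[Proposition 3.44]{MR2640827}).
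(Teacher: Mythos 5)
Your plan follows the same strategy as the paper — reduce to the linking inequality $\gamma(X) \cap B \ne \emptyset$, prove it by extending $\gamma$ oddly and invoking the piercing property $(i_7)$, and finish with the standard deformation/minimax argument — but you have left the decisive construction unresolved, and as written it has a real defect. You propose applying piercing to a map $\varphi : A_0 \times [0,1] \to W$. With that domain the piercing property can only yield a set of index $\ge i(A_0)$, and $(i_7)$ says nothing about the ``extra suspension dimension obtained from $v$''; the $+1$ must already be present in the index of the piercing domain before $(i_7)$ is invoked. You acknowledge this as ``the crux'' and suggest extending to $X \cup (-X)$, but you do not exhibit the extension or explain why it is continuous, which is exactly the non-obvious point.

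The paper's resolution is cleaner than you anticipate and closes the gap in one stroke. Set
\[
\widetilde{A} = \set{R\, \pi\bigl((1 - |t|)\, u + tv\bigr) : u \in A_0,\ -1 \le t \le 1},
\]
the symmetric double cone obtained by using both $v$ and $-v$ as apexes. The odd continuous map $SA_0 \to \widetilde{A}$, $(u,t) \mapsto R\,\pi((1-|t|)u + tv)$, together with $(i_2)$ and $(i_6)$, immediately gives $i(\widetilde{A}) \ge i(SA_0) = i(A_0) + 1$ \emph{before} any piercing. Then the odd extension is defined on $\widetilde{A} \times [0,1]$ by
\[
\varphi(u,t) = \begin{cases} \gamma(tu), & u \in \widetilde{A} \cap A,\\ -\gamma(-tu), & u \in \widetilde{A} \setminus A, \end{cases}
\]
and continuity across the interface $\widetilde A \cap A \cap (-A) = RA_0$ holds precisely because $\restr{\gamma}{A} = \id{A}$ forces $\gamma$ to be the identity (hence odd) on the symmetric inner cone $\set{tu : u \in A_0,\, 0 \le t \le R}$ — a detail your plan never uses. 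Since $\varphi$ maps into $W \setminus B$ (by the contradiction hypothesis and oddness), $(i_7)$ with the two closed half-spaces $\set{\norm{u} \le r}$ and $\set{\norm{u} \ge r}$ gives $i(\widetilde A) \le i(S_r \setminus B) = i(S \setminus B_0)$, hence $i(A_0) + 1 \le i(S \setminus B_0)$, contradicting \eqref{2.1}. This also streamlines your closing step: there is no need to ``pull back a point of $B_0$''; the index inequality alone supplies the contradiction. The remainder of your argument (upper bound via $\id{X} \in \Gamma$, criticality via deformation) is correct and matches the paper.
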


\begin{proof}
First we show that $A$ (homotopically) links $B$ with respect to $X$ in the sense that
\begin{equation} \label{2.3}
\gamma(X) \cap B \ne \emptyset \quad \forall \gamma \in \Gamma.
\end{equation}
If \eqref{2.3} does not hold, then there is a map $\gamma \in C(X,W \setminus B)$ such that $\gamma(X)$ is closed and $\restr{\gamma}{A} = \id{A}$. Let
\[
\widetilde{A} = \set{R\, \pi((1 - |t|)\, u + tv) : u \in A_0,\, -1 \le t \le 1}
\]
and note that $\widetilde{A}$ is closed since $A_0$ is closed (here $(1 - |t|)\, u + tv \ne 0$ since $v$ is not in the symmetric set $A_0$). Since
\[
SA_0 \to \widetilde{A}, \quad (u,t) \mapsto R\, \pi((1 - |t|)\, u + tv)
\]
is an odd continuous map,
\begin{equation} \label{2.4}
i(\widetilde{A}) \ge i(SA_0) = i(A_0) + 1
\end{equation}
by \ref{i2} and \ref{i6} of Proposition \ref{Proposition 2.1}. Consider the map
\[
\varphi : \widetilde{A} \times [0,1] \to W \setminus B, \quad \varphi(u,t) = \begin{cases}
\gamma(tu), & u \in \widetilde{A} \cap A\\[5pt]
- \gamma(-tu), & u \in \widetilde{A} \setminus A,
\end{cases}
\]
which is continuous since $\gamma$ is the identity on the symmetric set $\set{tu : u \in A_0,\, 0 \le t \le R}$. We have $\varphi(-u,t) = - \varphi(u,t)$ for all $(u,t) \in \widetilde{A} \times [0,1]$, $\varphi(\widetilde{A} \times [0,1]) = \gamma(X) \cup - \gamma(X)$ is closed, and $\varphi(\widetilde{A} \times \set{0}) = \set{0}$ and $\varphi(\widetilde{A} \times \set{1}) = \widetilde{A}$ since $\restr{\gamma}{A} = \id{A}$. Applying \ref{i7} with $\widetilde{A}_0 = \set{u \in W : \norm{u} \le r}$ and $\widetilde{A}_1 = \set{u \in W : \norm{u} \ge r}$ gives
\begin{equation} \label{2.5}
i(\widetilde{A}) \le i(\varphi(\widetilde{A} \times [0,1]) \cap \widetilde{A}_0 \cap \widetilde{A}_1) \le i((W \setminus B) \cap S_r) = i(S_r \setminus B) = i(S \setminus B_0),
\end{equation}
where $S_r = \set{u \in W : \norm{u} = r}$. By \eqref{2.4} and \eqref{2.5}, $i(A_0) < i(S \setminus B_0)$, contradicting \eqref{2.1}. Hence \eqref{2.3} holds.

It follows from \eqref{2.3} that $c \ge \inf \Phi(B)$, and $c \le \sup \Phi(X)$ since $\id{X} \in \Gamma$. If $\Phi$ satisfies the \PS{c} condition, then $c$ is a critical value of $\Phi$ by the classical minimax principle (see, e.g., Perera et al.\! \cite{MR2640827}).
\end{proof}

\begin{remark}
The linking construction in the proof of Theorem \ref{Theorem 2.2} was used in Perera and Szulkin \cite{MR2153141} to obtain nontrivial solutions of $p$-Laplacian problems with nonlinearities that interact with the spectrum. A similar construction based on the notion of cohomological linking was given in Degiovanni and Lancelotti \cite{MR2371112}. See also Perera et al.\! \cite[Proposition 3.23]{MR2640827}.
\end{remark}

Now let $\Phi$ be an even $C^1$-functional on $W$ and let $\A^\ast$ denote the class of symmetric subsets of $W$. Let $r > 0$, let $S_r = \set{u \in W : \norm{u} = r}$, let $0 < b \le + \infty$, and let $\Gamma$ denote the group of odd homeomorphisms of $W$ that are the identity outside $\Phi^{-1}(0,b)$. The pseudo-index of $M \in \A^\ast$ related to $i$, $S_r$, and $\Gamma$ is defined by
\[
i^\ast(M) = \min_{\gamma \in \Gamma}\, i(\gamma(M) \cap S_r)
\]
(see Benci \cite{MR84c:58014}). The following critical point theorem generalizes Bartolo et al. \cite[Theorem 2.4]{MR713209}.

\begin{theorem} \label{Theorem 2.4}
Let $A_0,\, B_0$ be symmetric subsets of $S$ such that $A_0$ is compact, $B_0$ is closed, and
\[
i(A_0) \ge k + m, \qquad i(S \setminus B_0) \le k
\]
for some $k, m \in \N$. Assume that there exists $R > r$ such that
\[
\sup \Phi(A) \le 0 < \inf \Phi(B), \qquad \sup \Phi(X) < b,
\]
where $A = \set{Ru : u \in A_0}$, $B = \set{ru : u \in B_0}$, and $X = \set{tu : u \in A,\, 0 \le t \le 1}$. For $j = k + 1,\dots,k + m$, let
\[
\A_j^\ast = \set{M \in \A^\ast : M \text{ is compact and } i^\ast(M) \ge j}
\]
and set
\[
c_j^\ast := \inf_{M \in \A_j^\ast}\, \max_{u \in M}\, \Phi(u).
\]
Then
\[
\inf \Phi(B) \le c_{k+1}^\ast \le \dotsb \le c_{k+m}^\ast \le \sup \Phi(X),
\]
in particular, $0 < c_j^\ast < b$. If, in addition, $\Phi$ satisfies the {\em \PS{c}} condition for all $c \in (0,b)$, then each $c_j^\ast$ is a critical value of $\Phi$ and there are $m$ distinct pairs of associated critical points.
\end{theorem}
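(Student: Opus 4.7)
The plan is to emulate Benci's pseudo-index scheme but with the cohomological index in place of the genus. The heart of the proof is the two-sided estimate
\[
\inf \Phi(B) \le c^\ast_{k+1} \le \cdots \le c^\ast_{k+m} \le \sup \Phi(X),
\]
after which the \PS{c} condition and a deformation confined to the group $\Gamma$ promote each $c^\ast_j$ to a critical value and yield multiplicity.

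For the upper bound I would show that $X$ itself lies in $\A^\ast_{k+m}$. Compactness is immediate from compactness of $A_0$. Fix $\gamma \in \Gamma$; since $A \subset \Phi^{\le 0}$ and $\Phi(0) = 0$ (the usual normalization in applications), every point of $A \cup \set{0}$ lies outside $\Phi^{-1}(0,b)$ and is fixed by $\gamma$. The map
\[
\varphi : A_0 \times [0,1] \to W, \qquad \varphi(u,t) = \gamma(tRu)
\]
is therefore odd, continuous, with compact (hence closed) image, and sends $A_0 \times \set{0}$ into $\set{\norm{u} \le r}$ and $A_0 \times \set{1}$ onto $A \subset \set{\norm{u} \ge r}$. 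The piercing property \ref{i7} of Proposition \ref{Proposition 2.1} then gives
\[
i(\gamma(X) \cap S_r) = i(\varphi(A_0 \times [0,1]) \cap S_r) \ge i(A_0) \ge k+m,
\]
and taking the minimum over $\gamma$ yields $i^\ast(X) \ge k+m$, so $c^\ast_j \le \sup \Phi(X) < b$ for $j \le k+m$.

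For the lower bound, since $\id{W} \in \Gamma$, any $M \in \A^\ast_{k+1}$ has $i(M \cap S_r) \ge i^\ast(M) \ge k+1$. The radial projection is an odd homeomorphism $S_r \setminus B \to S \setminus B_0$, so $i(S_r \setminus B) \le k$ by monotonicity \ref{i2}. Hence $M \cap B \ne \emptyset$, $\max_M \Phi \ge \inf \Phi(B) > 0$, and $c^\ast_{k+1} \ge \inf \Phi(B)$; monotonicity of the $c^\ast_j$ follows from the inclusions $\A^\ast_{k+m} \subset \cdots \subset \A^\ast_{k+1}$. Under \PS{c} the class $\A^\ast_j$ is invariant under $\Gamma$, because for $\eta \in \Gamma$ the assignment $\gamma \mapsto \gamma \circ \eta$ is a bijection of the group $\Gamma$, whence $i^\ast(\eta(M)) = i^\ast(M)$. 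A pseudo-gradient flow truncated to vanish outside $\Phi^{-1}(0,b)$ then produces, for each $c \in (0,b)$ with $K_c = \emptyset$, an $\eta \in \Gamma$ that pushes $\Phi^{c+\eps}$ into $\Phi^{c-\eps}$, contradicting the definition of $c^\ast_j$.

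For multiplicity, if $c := c^\ast_{k+j+1} = \cdots = c^\ast_{k+j+p}$, I would assume for contradiction $i(K_c) \le p-1$, pick a closed symmetric neighborhood $N$ of $K_c$ with $i(N) = i(K_c)$ (by the continuity property), take $M \in \A^\ast_{k+j+p}$ with $\max_M \Phi < c+\eps$, and deform $M' := \overline{M \setminus N}$ below $c - \eps$ using a $\Gamma$-deformation. The crucial inequality
\[
i^\ast(M') \ge i^\ast(M) - i(N)
\]
comes from applying subadditivity of $i$ to $\gamma(M) \cap S_r \subset (\gamma(M') \cap S_r) \cup \gamma(N)$ for every $\gamma \in \Gamma$ and using that $\gamma$ is an odd homeomorphism so $i(\gamma(N)) = i(N)$. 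This places $M' \in \A^\ast_{k+j+1}$, yielding the contradiction $c^\ast_{k+j+1} \le c - \eps$. Hence $i(K_c) \ge p$, so $K_c$ contains at least $p$ distinct pairs, and summing over the distinct critical levels gives $m$ pairs in total. The main obstacle I expect is precisely this last deformation: the standard deformation lemma only keeps $\eta$ supported near $\Phi^{-1}(c)$, and one must multiply the flow field by a Urysohn-type cutoff vanishing on $\set{\Phi \le 0} \cup \set{\Phi \ge b}$ (and shrink $\eps$ accordingly) so that the resulting homeomorphism genuinely belongs to $\Gamma$.
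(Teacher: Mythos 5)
Your proof is correct and follows essentially the same route as the paper: the piercing property \ref{i7} applied to $\varphi(u,t)=\gamma(tRu)$ (using compactness of $A_0$ for closedness of the image and $\restr{\gamma}{A}=\id{A}$ since $\Phi\le 0$ on $A$) gives $X\in\A^\ast_{k+m}$, while monotonicity and $\id{W}\in\Gamma$ force any $M\in\A^\ast_{k+1}$ to meet $B$. The paper stops there and cites standard critical point theory for the deformation and multiplicity part; your fuller spelling-out of the Benci-type argument (subadditivity estimate $i^\ast(M')\ge i^\ast(M)-i(N)$, truncated flow producing $\eta\in\Gamma$) is exactly what is being alluded to, so no genuinely different decomposition or lemma is involved. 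One small remark: you don't need to invoke the normalization $\Phi(0)=0$ to get $\gamma(0)=0$ — oddness of $\gamma$ already forces this.
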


\begin{proof}
If $M \in \A_{k+1}^\ast$,
\[
i(S_r \setminus B) = i(S \setminus B_0) \le k < k + 1 \le i^\ast(M) \le i(M \cap S_r)
\]
since $\id{W} \in \Gamma$. Hence $M$ intersects $B$ by \ref{i2} of Proposition \ref{Proposition 2.1}. It follows that $c_{k+1}^\ast \ge \inf \Phi(B)$.

If $\gamma \in \Gamma$, consider the continuous map
\[
\varphi : A \times [0,1] \to W, \quad \varphi(u,t) = \gamma(tu).
\]
We have $\varphi(A \times [0,1]) = \gamma(X)$, which is compact. Since $\gamma$ is odd, $\varphi(-u,t) = - \varphi(u,t)$ for all $(u,t) \in A \times [0,1]$ and $\varphi(A \times \set{0}) = \set{\gamma(0)} = \set{0}$. Since $\Phi \le 0$ on $A$, $\restr{\gamma}{A} = \id{A}$ and hence $\varphi(A \times \set{1}) = A$. Applying \ref{i7} with $\widetilde{A}_0 = \set{u \in W : \norm{u} \le r}$ and $\widetilde{A}_1 = \set{u \in W : \norm{u} \ge r}$ gives
\[
i(\gamma(X) \cap S_r) = i(\varphi(A \times [0,1]) \cap \widetilde{A}_0 \cap \widetilde{A}_1) \ge i(A) = i(A_0) \ge k + m.
\]
It follows that $i^\ast(X) \ge k + m$. So $X \in \A_{k+m}^\ast$ and hence $c_{k+m}^\ast \le \sup \Phi(X)$.

The rest now follows from standard results in critical point theory (see, e.g., Perera et al.\! \cite{MR2640827}).
\end{proof}

\begin{remark}
A similar construction was used in Perera and Szulkin \cite{MR2153141}. See also Perera et al.\! \cite[Proposition 3.44]{MR2640827}.
\end{remark}

\section{Variational setting}

Solutions of problem \eqref{1.4} coincide with critical points of the $C^1$-functional
\[
\Phi(u) = \int_\Omega \left[\frac{1}{N}\, |\nabla u|^N - \lambda\, F(u)\right] dx, \quad u \in W^{1,N}_0(\Omega),
\]
where
\[
F(t) = \int_0^t |s|^{N-2}\, se^{\, |s|^{N'}} ds = \int_0^{|t|} s^{N-1} e^{\, s^{N'}} ds.
\]
The following lemma is a special case of a result of Adimurthi \cite{MR1079983}.

\begin{lemma} \label{Lemma 3.1}
$\Phi$ satisfies the {\em \PS{c}} condition for all $c < \alpha_N^{N-1}/N$.
\end{lemma}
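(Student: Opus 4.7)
The plan is to establish the Palais--Smale condition at level $c < \alpha_N^{N-1}/N$ by the standard four-step scheme for Trudinger--Moser problems: boundedness, extraction of a weak limit that is a critical point, a Brezis--Lieb decomposition that exploits the bound on $c$ to stay strictly below the Trudinger--Moser threshold on the difference, and finally strong convergence. Let $(u_j)$ be a sequence with $\Phi(u_j) \to c$ and $\Phi'(u_j) \to 0$ in $W^{1,N}_0(\Omega)^\ast$.

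First I would prove that $(u_j)$ is bounded in $W^{1,N}_0(\Omega)$ and that $\int_\Omega f(u_j)\,u_j\,dx$ is bounded, where $f(t) = |t|^{N-2}\,te^{\,|t|^{N'}}$. A direct computation shows $f(t)\,t - NF(t) \ge 0$ and in fact $NF(t)/(f(t)\,t) \to 0$ as $|t| \to \infty$, so that $f(t)\,t - NF(t) \ge (1/2)\,f(t)\,t$ outside a compact set. Combined with
\[
N\,\Phi(u_j) - \Phi'(u_j)\,u_j = \lambda \int_\Omega \bigl[f(u_j)\,u_j - NF(u_j)\bigr]\,dx,
\]
this controls $\int_\Omega f(u_j)\,u_j\,dx$, hence also $\int_\Omega F(u_j)\,dx$, and then $\Phi(u_j) \to c$ yields boundedness of $\|\nabla u_j\|_N$.

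Passing to a subsequence, $u_j \wto u$ in $W^{1,N}_0(\Omega)$, $u_j \to u$ a.e.\ and in every $L^q(\Omega)$. The bound on $\int f(u_j)\,u_j$ gives equi-integrability of $f(u_j)$ and $F(u_j)$, and Vitali's theorem combined with pointwise convergence yields $f(u_j) \to f(u)$ in $L^1_{\loc}(\Omega)$ and $F(u_j) \to F(u)$ in $L^1(\Omega)$. Testing $\Phi'(u_j) \to 0$ against $\varphi \in C^\infty_c(\Omega)$, one concludes in the usual way that $u$ is a weak solution of \eqref{1.4}; in particular $\Phi'(u)\,u = 0$ and therefore
\[
\Phi(u) = \lambda \int_\Omega \Bigl[\tfrac{1}{N}\,f(u)\,u - F(u)\Bigr] dx \ge 0.
\]

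Set $v_j := u_j - u$. Brezis--Lieb gives $\|\nabla u_j\|_N^N = \|\nabla u\|_N^N + \|\nabla v_j\|_N^N + o(1)$, and combined with the strong convergence $\int F(u_j) \to \int F(u)$ this yields
\[
\tfrac{1}{N}\,\|\nabla v_j\|_N^N = \Phi(u_j) - \Phi(u) + o(1) \longrightarrow c - \Phi(u) \le c,
\]
so $\|\nabla v_j\|_N^{N'} \to \bigl(N(c - \Phi(u))\bigr)^{1/(N-1)} < \alpha_N$. This strict inequality is the key point where the hypothesis $c < \alpha_N^{N-1}/N$ is used. Using the elementary inequality $|v_j + u|^{N'} \le (1+\eps)\,|v_j|^{N'} + C_\eps\,|u|^{N'}$ together with Trudinger--Moser applied to $v_j/\|\nabla v_j\|_N$, one obtains $\int_\Omega e^{\,q\,|u_j|^{N'}}\,dx$ bounded for some $q > 1$ and hence $f(u_j)$ bounded in some $L^r(\Omega)$ with $r > 1$.

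Finally, testing $\Phi'(u_j) \to 0$ against $v_j$ and using $f(u_j) \in L^r$, $v_j \to 0$ in $L^{r'}$ to conclude $\int_\Omega f(u_j)\,v_j\,dx \to 0$, one arrives at
\[
\int_\Omega |\nabla u_j|^{N-2}\,\nabla u_j \cdot \nabla(u_j - u)\,dx \longrightarrow 0,
\]
from which the standard $S_+$-type monotonicity argument for the $N$-Laplacian yields $\nabla u_j \to \nabla u$ in $L^N(\Omega)$. The main obstacle is of course the third step: obtaining the strict bound $\|\nabla v_j\|_N^{N'} < \alpha_N$ from the threshold hypothesis on $c$, which in turn requires the nontrivial facts that $u$ is already a critical point and that $\Phi(u) \ge 0$; once this subcritical gap is secured, the remaining steps are routine.
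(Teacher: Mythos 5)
The paper does not actually prove Lemma~\ref{Lemma 3.1}; it simply states that it is ``a special case of a result of Adimurthi \cite{MR1079983}'' and moves on. Your sketch is therefore a genuine reconstruction rather than a parallel of a proof in the paper, and it correctly reproduces the standard compactness mechanism used for Trudinger--Moser critical problems (Adimurthi \cite{MR1079983}, de Figueiredo--Miyagaki--Ruf \cite{MR1386960}): boundedness via $N\Phi(u_j) - \Phi'(u_j)\,u_j$ and the superquadraticity-type inequality $f(t)t - NF(t) \ge 0$ with $NF(t)/(f(t)t) \to 0$; extraction of a weak limit $u$ and identification of $u$ as a critical point, giving $\Phi(u) = \lambda\int[\tfrac{1}{N}f(u)u - F(u)] \ge 0$; Brezis--Lieb on the gradient plus the strong $L^1$ convergence $\int F(u_j) \to \int F(u)$ to obtain $\tfrac{1}{N}\|\nabla v_j\|_N^N \to c - \Phi(u) \le c < \alpha_N^{N-1}/N$; and then Trudinger--Moser applied to $v_j$ together with the elementary inequality $|v_j+u|^{N'} \le (1+\eps)|v_j|^{N'} + C_\eps|u|^{N'}$ and H\"older to get $f(u_j)$ bounded in $L^r$ for some $r>1$, after which the $(S_+)$ property of $-\Delta_N$ gives strong convergence. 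The arithmetic and the role of the threshold $c < \alpha_N^{N-1}/N$ are handled correctly.

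One step you treat too lightly is the identification of $u$ as a weak solution (``testing against $\varphi \in C^\infty_c$\dots one concludes in the usual way''). Passing to the limit in $\int_\Omega |\nabla u_j|^{N-2}\nabla u_j\cdot\nabla\varphi\,dx$ is \emph{not} a consequence of $\nabla u_j \wto \nabla u$ in $L^N$; one needs $|\nabla u_j|^{N-2}\nabla u_j \wto |\nabla u|^{N-2}\nabla u$ in $L^{N'}$, and the standard way to get this is to first prove $\nabla u_j \to \nabla u$ a.e.\ via a Boccardo--Murat-type truncation argument (or to argue that the $(S_+)$ step can be carried out first, but then you cannot invoke $\Phi(u)\ge 0$ before it). Since your Brezis--Lieb step genuinely requires $\Phi(u) \ge 0$, and hence $\Phi'(u)u=0$, before you know strong convergence, this a.e.\ gradient convergence is a real ingredient that should be named rather than folded into ``the usual way.'' With that step made explicit, the outline is a faithful and correct proof of the lemma the paper outsources to the literature.
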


Let $\M$ and $\Psi$ be as in \eqref{1.9}. The following lemma implies that for any subset $A$ of $\M$ on which $\Psi$ is bounded, there exists $R > 0$ such that $\Phi(tu) \le 0$ for all $u \in A$ and $t \ge R$.

\begin{lemma} \label{Lemma 3.3}
For all $u \in \M$ and $t \ge 0$,
\[
\Phi(tu) \le \frac{t^N}{N} \left[1 - \frac{\lambda}{N' \vol{\Omega}^{1/(N-1)}} \left(\frac{t}{\Psi(u)}\right)^{N'}\right].
\]
\end{lemma}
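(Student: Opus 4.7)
The plan is to expand $\Phi(tu)$ using $u \in \M$, bound the nonlinear term from below by a pure power of $|tu|$, and then use Hölder's inequality to convert that power back into a quantity involving $\Psi(u)$. Throughout I will use the identities $N+N' = NN'$ and $NN'-N = N'$, which follow from $N' = N/(N-1)$.

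First I would note that since $\int_\Omega |\nabla u|^N\, dx = 1$ for $u \in \M$, the functional splits cleanly as
\[
\Phi(tu) = \frac{t^N}{N} - \lambda \int_\Omega F(tu)\, dx.
\]
To obtain the stated upper bound on $\Phi(tu)$, I need a lower bound on $\int_\Omega F(tu)\, dx$. Using the elementary inequality $e^x \ge x$ for $x \ge 0$ inside the definition of $F$, I get
\[
F(s) = \int_0^{|s|} r^{N-1}\, e^{r^{N'}}\, dr \ge \int_0^{|s|} r^{N-1+N'}\, dr = \frac{|s|^{N+N'}}{N+N'} = \frac{|s|^{NN'}}{NN'}.
\]

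Next, to bring in $\Psi(u) = 1/\int_\Omega |u|^N\, dx$, I would apply Hölder's inequality with exponents $N'$ and $N$ to write
\[
\int_\Omega |u|^N\, dx \le \left(\int_\Omega |u|^{NN'}\, dx\right)^{1/N'} \vol{\Omega}^{1/N},
\]
and then raise to the $N'$-th power and use $N'/N = 1/(N-1)$ to conclude
\[
\int_\Omega |u|^{NN'}\, dx \ge \frac{1}{\vol{\Omega}^{1/(N-1)}} \left(\int_\Omega |u|^N\, dx\right)^{N'} = \frac{1}{\vol{\Omega}^{1/(N-1)}\, \Psi(u)^{N'}}.
\]

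Finally I would chain these estimates together: multiplying through by $t^{NN'}/(NN')$ and combining with the expansion of $\Phi(tu)$ yields
\[
\Phi(tu) \le \frac{t^N}{N} - \frac{\lambda\, t^{NN'}}{NN'\, \vol{\Omega}^{1/(N-1)}\, \Psi(u)^{N'}},
\]
and factoring out $t^N/N$, using $NN'-N = N'$ in the exponent of $t$, produces exactly the claimed inequality. There is no substantive obstacle here — the only thing to watch is that the arithmetic of $N$ and $N'$ aligns correctly, in particular verifying that Hölder is used with the pair of conjugate exponents that makes $(\int|u|^N)^{N'}$ appear on the correct side with the correct power of $\vol{\Omega}$.
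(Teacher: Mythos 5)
Your proof is correct and follows essentially the same two-step argument as the paper: lower-bound $F$ by $|s|^{N+N'}/(N+N')$ using $e^x \ge x$, then apply Hölder with conjugate exponents $N'$ and $N$ to relate $\int |u|^{N+N'}$ to $(\int|u|^N)^{N'}$ and hence to $\Psi(u)^{-N'}$. The only cosmetic difference is that you phrase the exponent as $NN'$ rather than $N+N'$ (the two are equal), which changes nothing of substance.
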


\begin{proof}
Since $e^t \ge t$ for all $t \ge 0$,
\[
F(t) \ge \frac{|t|^{N+N'}}{N + N'} \quad \forall t \in \R,
\]
so
\[
\Phi(tu) \le t^N \left(\frac{1}{N} - \frac{\lambda t^{N'}}{N + N'} \int_\Omega |u|^{N+N'}\, dx\right).
\]
By the H\"older inequality,
\[
\vol{\Omega}^{1/(N-1)} \int_\Omega |u|^{N+N'}\, dx \ge \left(\int_\Omega |u|^N\, dx\right)^{N'} = \frac{1}{\Psi(u)^{N'}}. \QED
\]
\end{proof}

\section{Proof of Theorem \ref{Theorem 1.1}}

In this section we prove Theorem \ref{Theorem 1.1}. Our strategy is to apply Theorem \ref{Theorem 2.2} with suitable sets defined in terms of the eigenvalues of $- \Delta_N$, for which the minimax level $c$ is below the threshold for compactness given by Lemma \ref{Lemma 3.1}.

Since problem \eqref{1.4} has a nontrivial solution when $0 < \lambda < \lambda_1(N)$ by Adimurthi \cite{MR1079983}, we may assume that $\lambda > \lambda_1(N)$. Then
\begin{equation} \label{3.1}
\lambda_k(N) < \lambda < \lambda_{k+1}(N)
\end{equation}
for some $k$. By Degiovanni and Lancelotti \cite[Theorem 2.3]{MR2514055}, the sublevel set $\Psi^{\lambda_k(N)}$ has a compact symmetric subset $C$ of index $k$ that is bounded in $L^\infty(\Omega) \cap C^{1,\alpha}_\loc(\Omega)$. Without loss of generality we may assume that $0 \in \Omega$. For all $m \in \N$ so large that $B_{2/m}(0) \subset \Omega$, let
\[
\eta_m(x) = \begin{cases}
0, & |x| \le 1/2\, m^{m+1}\\[5pt]
2\, m^m \left(|x| - \dfrac{1}{2\, m^{m+1}}\right), & 1/2\, m^{m+1} < |x| \le 1/m^{m+1}\\[10pt]
(m\, |x|)^{1/m}, & 1/m^{m+1} < |x| \le 1/m\\[5pt]
1, & |x| > 1/m
\end{cases}
\]
(see Zhang et al. \cite{MR2112476}). Let
\[
\pi(u) = \frac{u}{\norm{u}}, \quad u \in W^{1,N}_0(\Omega) \setminus \set{0}
\]
be the radial projection onto $\M$.

\begin{lemma}
As $m \to \infty$,
\begin{gather}
\label{3.2} \int_\Omega |\eta_m u|^N\, dx = \int_\Omega |u|^N\, dx + \O\left(\frac{1}{m^N}\right);\\[10pt]
\label{3.3} \int_\Omega |\nabla (\eta_m u)|^N\, dx = 1 + \O\left(\frac{1}{m^{N-1}}\right);\\[10pt]
\label{3.4} \Psi(\pi(\eta_m u)) = \Psi(u) + \O\left(\frac{1}{m^{N-1}}\right)
\end{gather}
uniformly in $u \in C$.
\end{lemma}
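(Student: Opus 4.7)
The plan is to exploit that $\eta_m$ modifies $u$ only on the small ball $B_{1/m}(0)$, where the $L^\infty(\Omega) \cap C^{1,\alpha}_\loc(\Omega)$ boundedness of $C$ gives uniform control of both $\|u\|_\infty$ and $\|\nabla u\|_\infty$ (for $m$ large enough that $\overline{B_{1/m}(0)} \subset \Omega$, which is possible since $0 \in \Omega$). Everything then reduces to two size estimates: $\vol{B_{1/m}(0)} = \O(m^{-N})$, and the less obvious $\int_\Omega |\nabla \eta_m|^N\, dx = \O(m^{-(N-1)})$.

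For \eqref{3.2} I would simply note that $\eta_m \equiv 1$ outside $B_{1/m}(0)$ and $0 \le \eta_m \le 1$, so the difference $\int_\Omega (|u|^N - |\eta_m u|^N)\, dx$ is bounded by $\|u\|_\infty^N \vol{B_{1/m}(0)} = \O(m^{-N})$, uniformly in $u \in C$.

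The main step is \eqref{3.3}. Again, since $\nabla(\eta_m u) = \nabla u$ off $B_{1/m}(0)$ and $\int_\Omega |\nabla u|^N dx = 1$, the difference $\int_\Omega |\nabla(\eta_m u)|^N dx - 1$ localizes to $B_{1/m}(0)$. Using the triangle inequality $|\nabla(\eta_m u)| \le \|u\|_\infty\, |\nabla \eta_m| + |\nabla u|$ together with $(a+b)^N \le 2^{N-1}(a^N + b^N)$, and the $C^{1,\alpha}_\loc$ bound which gives $\int_{B_{1/m}(0)} |\nabla u|^N dx = \O(m^{-N})$, matters reduce to estimating $\int_\Omega |\nabla \eta_m|^N dx$. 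The contribution of the inner annulus $\set{1/2m^{m+1} < |x| \le 1/m^{m+1}}$ is $(2m^m)^N \cdot \O(m^{-N(m+1)}) = \O(m^{-N})$, and the dominant contribution comes from the outer annulus $\set{1/m^{m+1} < |x| \le 1/m}$, where $|\nabla \eta_m(x)| = (m|x|)^{1/m-1}$ and a direct integration in polar coordinates (with the substitution $s = mr$) gives
\[
\omega_{N-1} \int_{1/m^{m+1}}^{1/m} (mr)^{N/m - N}\, r^{N-1}\, dr = \frac{\omega_{N-1}}{N\, m^{N-1}}\bigl(1 - m^{-N}\bigr) = \O(m^{-(N-1)}).
\]

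Finally, \eqref{3.4} follows algebraically from the first two estimates. Since $\Psi$ is zero-homogeneous on $W^{1,N}_0(\Omega) \setminus \set{0}$,
\[
\Psi(\pi(\eta_m u)) = \frac{\int_\Omega |\nabla(\eta_m u)|^N\, dx}{\int_\Omega |\eta_m u|^N\, dx} = \frac{1 + \O(m^{-(N-1)})}{1/\Psi(u) + \O(m^{-N})},
\]
and since $\lambda_1(N) \le \Psi(u) \le \lambda_k(N)$ on $C \subset \Psi^{\lambda_k(N)} \subset \M$, both numerator and denominator are bounded and bounded away from zero uniformly in $u \in C$, so expanding yields $\Psi(\pi(\eta_m u)) = \Psi(u) + \O(m^{-(N-1)})$. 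The main obstacle is the explicit computation of the outer-annulus integral above; the rate $m^{-(N-1)}$ is intrinsic to the Moser profile $(m|x|)^{1/m}$ and is exactly what propagates through to \eqref{3.3} and \eqref{3.4}.
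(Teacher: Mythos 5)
Your proof is correct and follows essentially the same route as the paper: localize the difference to $B_{1/m}(0)$, use the uniform $L^\infty$ and $C^{1,\alpha}_{\loc}$ bounds on $C$, and compute the $\nabla\eta_m$ integral directly (your polar-coordinate computation giving $\tfrac{\omega_{N-1}}{Nm^{N-1}}(1-m^{-N})$ checks out, as does the $\O(m^{-N})$ bound on the inner annulus). The only minor cosmetic difference is that you apply $(a+b)^N \le 2^{N-1}(a^N+b^N)$ to reduce to the single integral $\int|\nabla\eta_m|^N$, whereas the paper expands $|\nabla(\eta_m u)|^N$ binomially and bounds $\int_{B_{1/m}(0)}|\nabla\eta_m|^j\,dx=\O(m^{-(N-1)})$ for all $j=0,\dots,N$; both arrive at \eqref{3.3} with the same rate, and \eqref{3.4} then follows in the same way.
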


\begin{proof}
We have
\[
\abs{\int_\Omega |\eta_m u|^N\, dx - \int_\Omega |u|^N\, dx} \le \int_{B_{1/m}(0)} \left(|\eta_m u|^N + |u|^N\right) dx = \O\left(\frac{1}{m^N}\right)
\]
since $C$ is bounded in $L^\infty(\Omega)$ and $|\eta_m| \le 1$, so \eqref{3.2} holds. Next
\[
\abs{\int_\Omega |\nabla (\eta_m u)|^N\, dx - \int_\Omega |\nabla u|^N\, dx} \le \int_{B_{1/m}(0)} \left(|\nabla (\eta_m u)|^N + |\nabla u|^N\right) dx
\]
and
\[
\int_{B_{1/m}(0)} |\nabla (\eta_m u)|^N\, dx \le \sum_{j=0}^N \binom{N}{j} \int_{B_{1/m}(0)} |\nabla u|^{N-j}\, |u|^j\, |\nabla \eta_m|^j\, dx.
\]
Since $C$ is bounded in $C^1(B_{1/m}(0))$, $u$ and $\nabla u$ are bounded, and a direct calculation shows that
\[
\int_{B_{1/m}(0)} |\nabla \eta_m|^j\, dx = \O\left(\frac{1}{m^{N-1}}\right), \quad j = 0,\dots,N,
\]
so \eqref{3.3} follows. Since
\[
\Psi(\pi(\eta_m u)) = \frac{\dint_\Omega |\nabla (\eta_m u)|^N\, dx}{\dint_\Omega |\eta_m u|^N\, dx},
\]
\eqref{3.4} is immediate from \eqref{3.2} and \eqref{3.3}.
\end{proof}

Set $C_m = \set{\pi(\eta_m u) : u \in C}$. Since $C \subset \Psi^{\lambda_k(N)}$,
\[
\Psi(\pi(\eta_m u)) \le \lambda_k(N) + \O\left(\frac{1}{m^{N-1}}\right) \quad \forall u \in C
\]
by \eqref{3.4}. Using $\lambda_k(N) < \lambda$, we fix $m$ so large that
\begin{equation} \label{3.5}
\Psi(u) \le \lambda \quad \forall u \in C_m.
\end{equation}
Then $C_m \subset \M \setminus \Psi_{\lambda_{k+1}(N)}$ since $\lambda < \lambda_{k+1}(N)$, so
\[
i(C_m) \le i(\M \setminus \Psi_{\lambda_{k+1}(N)}) = k
\]
by \ref{i2} of Proposition \ref{Proposition 2.1} and \eqref{1.7}. On the other hand, $C \to C_m,\, u \mapsto \pi(\eta_m u)$ is an odd continuous map and hence
\[
i(C_m) \ge i(C) = k
\]
by \ref{i2} again. Thus,
\begin{equation} \label{3.6}
i(C_m) = k.
\end{equation}

We are now ready to prove Theorem \ref{Theorem 1.1}.

\begin{proof}[Proof of Theorem \ref{Theorem 1.1}]
We apply Theorem \ref{Theorem 2.2} to our functional $\Phi$ with
\[
A_0 = C_m, \qquad B_0 = \Psi_{\lambda_{k+1}(N)},
\]
noting that \eqref{2.1} follows from \eqref{3.6}, \eqref{3.1}, and \eqref{1.7}. Let $R > r > 0$, let $v \in \M \setminus C_m$, and let $A$, $B$ and $X$ be as in Theorem \ref{Theorem 2.2}.

First we show that $\inf \Phi(B) > 0$ if $r$ is sufficiently small. Since $e^t \le 1 + te^t$ for all $t \ge 0$,
\[
F(t) \le \frac{|t|^N}{N} + |t|^{N+N'} e^{\, |t|^{N'}} \quad \forall t \in \R,
\]
so for $u \in \Psi_{\lambda_{k+1}(N)}$,
\begin{eqnarray*}
\Phi(ru) & \ge & \int_\Omega \left[\frac{r^N}{N}\, |\nabla u|^N - \frac{\lambda r^N}{N}\, |u|^N - \lambda r^{N+N'}\, |u|^{N+N'} e^{\, r^{N'}\! |u|^{N'}}\right] dx\\[10pt]
& \ge & \frac{r^N}{N} \left(1 - \frac{\lambda}{\lambda_{k+1}(N)}\right) - \lambda r^{N+N'}\, \bigg(\int_\Omega e^{\, 2r^{N'}\! |u|^{N'}} dx\bigg)^{1/2} \norm[2\, (N+N')]{u}^{N+N'}.
\end{eqnarray*}
If $2\, r^{N'} \le \alpha_N$, then
\[
\int_\Omega e^{\, 2r^{N'}\! |u|^{N'}} dx \le \int_\Omega e^{\, \alpha_N |u|^{N'}} dx,
\]
which is bounded by \eqref{1.3}. Since $W^{1,N}_0(\Omega) \hookrightarrow L^{2\, (N+N')}(\Omega)$ and $\lambda < \lambda_{k+1}(N)$, it follows that $\inf \Phi(B) > 0$ if $r$ is sufficiently small.

Next we show that $\sup \Phi(A) \le 0$ if $R$ is sufficiently large. Since $e^t \ge 1$ for all $t \ge 0$,
\[
F(t) \ge \frac{|t|^N}{N} \quad \forall t \in \R,
\]
so for $u \in C_m$ and any $t \ge 0$,
\begin{eqnarray*}
\Phi(tu) & \le & \int_\Omega \left[\frac{t^N}{N}\, |\nabla u|^N - \frac{\lambda t^N}{N}\, |u|^N\right] dx\\[10pt]
& \le & \frac{t^N}{N} \left(1 - \frac{\lambda}{\Psi(u)}\right)\\[7.5pt]
& \le & 0
\end{eqnarray*}
by \eqref{3.5}. Since $C$ is compact and the map $C \to C_m,\, u \mapsto \pi(\eta_m u)$ is continuous, $C_m$ is compact, and hence so is the set $\set{\pi((1 - t)\, u + tv) : u \in C_m,\, 0 \le t \le 1}$. So $\Psi$ is bounded on this set, and there exists $R > r$ such that $\Phi \le 0$ on $\set{R\, \pi((1 - t)\, u + tv) : u \in C_m,\, 0 \le t \le 1}$ by Lemma \ref{Lemma 3.3}.

Now we show that $\sup \Phi(X) < \alpha_N^{N-1}/N$ for a suitably chosen $v$. Let
\[
v_j(x) = \frac{1}{\omega_{N-1}^{1/N}}\, \begin{cases}
(\log j)^{(N-1)/N}, & |x| \le 1/j\\[10pt]
\dfrac{\log |x|^{-1}}{(\log j)^{1/N}}, & 1/j < |x| \le 1\\[10pt]
0, & |x| > 1.
\end{cases}
\]
Then $v_j \in W^{1,N}(\R^N)$, $\norm[N]{\nabla v_j} = 1$, and $\norm[N]{v_j}^N = \O(1/\log j)$ as $j \to \infty$. We take $v(x) = \widetilde{v}_j(x) := v_j(x/r_m)$ with $r_m = 1/2\, m^{m+1}$ and $j$ sufficiently large. Since $B_{r_m}(0) \subset \Omega$, $\widetilde{v}_j \in W^{1,N}_0(\Omega)$ and $\norm[N]{\nabla \widetilde{v}_j} = 1$. For sufficiently large $j$,
\[
\Psi(\widetilde{v}_j) = \frac{1}{r_m^N\, \norm[N]{v_j}^N} > \lambda
\]
and hence $\widetilde{v}_j \notin C_m$ by \eqref{3.5}. For $u \in C_m$ and $s,\, t \ge 0$,
\[
\Phi(su + t \widetilde{v}_j) = \Phi(su) + \Phi(t \widetilde{v}_j)
\]
since $u = 0$ on $B_{r_m}(0)$ and $\widetilde{v}_j = 0$ on $\Omega \setminus B_{r_m}(0)$. Since $\Phi(su) \le 0$, it suffices to show that $\sup_{t \ge 0}\, \Phi(t \widetilde{v}_j) < \alpha_N^{N-1}/N$ for arbitrarily large $j$. Since $\Phi(t \widetilde{v}_j) \to - \infty$ as $t \to + \infty$ by Lemma \ref{Lemma 3.3}, there exists $t_j \ge 0$ such that
\begin{equation} \label{4.1T}
\Phi(t_j \widetilde{v}_j) = \frac{t_j^N}{N} - \lambda \int_{B_{r_m}(0)} F(t_j \widetilde{v}_j)\, dx = \sup_{t \ge 0}\, \Phi(t \widetilde{v}_j)
\end{equation}
and
\begin{equation} \label{4.2T}
\Phi'(t_j \widetilde{v}_j)\, \widetilde{v}_j = t_j^{N-1} \left(1 - \lambda \int_{B_{r_m}(0)} \widetilde{v}_j^N e^{\, t_j^{N'} \widetilde{v}_j^{N'}}\, dx\right) = 0.
\end{equation}
Suppose $\Phi(t_j \widetilde{v}_j) \ge \alpha_N^{N-1}/N$ for all sufficiently large $j$. Since $F(t) \ge 0$ for all $t \in \R$, then \eqref{4.1T} gives $t_j^{N'} \ge \alpha_N$, and then \eqref{4.2T} gives
\begin{multline*}
\frac{1}{\lambda} = \int_{B_{r_m}(0)} \widetilde{v}_j^N e^{\, t_j^{N'} \widetilde{v}_j^{N'}}\, dx \ge \int_{B_{r_m}(0)} \widetilde{v}_j^N e^{\, \alpha_N\, \widetilde{v}_j^{N'}}\, dx\\[10pt]
= r_m^N \int_{B_1(0)} v_j^N e^{\, \alpha_N\, v_j^{N'}}\, dx \ge r_m^N \int_{B_{1/j}(0)} v_j^N e^{\, \alpha_N\, v_j^{N'}}\, dx = \frac{r_m^N}{N}\, (\log j)^{N-1},
\end{multline*}
which is impossible for large $j$.

Now
\[
c \le \sup \Phi(X) < \frac{\alpha_N^{N-1}}{N}
\]
by \eqref{2.2}, so $\Phi$ satisfies the \PS{c} condition by Lemma \ref{Lemma 3.1}. Thus, $\Phi$ has a critical point $u$ at the level $c$ by Theorem \ref{Theorem 2.2}. Since
\[
c \ge \inf \Phi(B) > 0
\]
by \eqref{2.2} again, $u$ is nontrivial.
\end{proof}

\section{Proof of Theorem \ref{Theorem 1.2}}

In this section we prove Theorem \ref{Theorem 1.2}.

\begin{lemma}
For all $t \in \R$,
\begin{gather}
\label{4.2} F(t) \le \frac{|t|^N}{N}\; e^{\, |t|^{N'}} - \frac{|t|^{N+N'}}{N^2};\\[5pt]
\label{4.3} F(t) \le \frac{|t|^{N-N'}}{N'}\; e^{\, |t|^{N'}}.
\end{gather}
\end{lemma}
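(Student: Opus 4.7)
By symmetry, both sides of \eqref{4.2} and \eqref{4.3} depend only on $|t|$, so it suffices to treat $t \ge 0$. The common idea is to apply integration by parts directly to
\[
F(t) = \int_0^t s^{N-1}\, e^{\, s^{N'}}\, ds,
\]
splitting the integrand differently for each inequality, and then to discard a non-negative remainder.

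For \eqref{4.2}, the natural choice is $u = e^{\, s^{N'}}$, $dv = s^{N-1}\, ds$, giving $du = N'\, s^{N'-1}\, e^{\, s^{N'}}\, ds$ and $v = s^N/N$. This produces the identity
\[
F(t) = \frac{t^N}{N}\, e^{\, t^{N'}} - \frac{N'}{N} \int_0^t s^{N + N' - 1}\, e^{\, s^{N'}}\, ds.
\]
Using $N'/N = 1/(N-1)$ together with the trivial estimate $e^{\, s^{N'}} \ge 1$, the remaining integral is bounded below by $t^{N+N'}/(N+N')$, yielding a subtracted term of size $t^{N+N'}/[(N-1)(N+N')]$. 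The algebraic identity $(N-1)(N+N') = N(N-1) + N = N^2$ then collapses this to $t^{N+N'}/N^2$, which is exactly \eqref{4.2}.

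For \eqref{4.3}, I would split $s^{N-1}\, e^{\, s^{N'}} = s^{N-N'} \cdot s^{N'-1}\, e^{\, s^{N'}}$ and note that $s^{N'-1}\, e^{\, s^{N'}} = (1/N')\, \tfrac{d}{ds}\, e^{\, s^{N'}}$. Integration by parts with $u = s^{N-N'}$, which vanishes at $s = 0$ when $N \ge 3$ since $N - N' = N(N-2)/(N-1) > 0$, gives
\[
F(t) = \frac{t^{N-N'}}{N'}\, e^{\, t^{N'}} - \frac{N-N'}{N'} \int_0^t s^{N-N'-1}\, e^{\, s^{N'}}\, ds.
\]
The remainder has a non-negative coefficient and a non-negative integrand, so dropping it yields \eqref{4.3}. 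The borderline case $N = 2$ (where $N - N' = 0$) is even easier: $F(t) = (e^{\, t^2} - 1)/2 \le e^{\, t^2}/2$, which matches the right-hand side.

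There is no serious obstacle here: each bound reduces to a single integration by parts followed by a sign estimate. The only points worth flagging are the algebraic simplification $(N-1)(N+N') = N^2$ used in \eqref{4.2}, and the necessity of handling $N = 2$ separately in \eqref{4.3} so that the formally singular factor $s^{N-N'-1}$ never actually appears in the calculation.
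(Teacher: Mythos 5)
Your proof is correct and matches the paper's almost verbatim: the same two integrations by parts, the same lower bound $e^{s^{N'}} \ge 1$ on the remainder for \eqref{4.2}, and the same discarding of the non-negative remainder for \eqref{4.3}. Your separate handling of $N = 2$ in \eqref{4.3}, where the integration-by-parts identity formally produces $0 \cdot \infty$, is a detail the paper elides, though it never arises since the lemma is only invoked in Theorem \ref{Theorem 1.2} under the hypothesis $N \ge 3$.
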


\begin{proof}
Integrating by parts gives
\begin{multline*}
F(t) = \frac{|t|^N}{N}\; e^{\, |t|^{N'}} - \frac{N'}{N} \int_0^{|t|} s^{N+N'-1} e^{\, s^{N'}} ds\\[10pt]
\le \frac{|t|^N}{N}\; e^{\, |t|^{N'}} - \frac{N'}{N} \int_0^{|t|} s^{N+N'-1}\, ds = \frac{|t|^N}{N}\; e^{\, |t|^{N'}} - \frac{|t|^{N+N'}}{N^2}
\end{multline*}
and
\[
F(t) = \frac{|t|^{N-N'}}{N'}\; e^{\, |t|^{N'}} - \frac{N - N'}{N'} \int_0^{|t|} s^{N-N'-1} e^{\, s^{N'}} ds \le \frac{|t|^{N-N'}}{N'}\; e^{\, |t|^{N'}}. \QED
\]
\end{proof}

We are now ready to prove Theorem \ref{Theorem 1.2}.

\begin{proof}[Proof of Theorem \ref{Theorem 1.2}]
In view of Lemma \ref{Lemma 3.1}, we apply Theorem \ref{Theorem 2.4} with $b = \alpha_N^{N-1}/N$. By Degiovanni and Lancelotti \cite[Theorem 2.3]{MR2514055}, the sublevel set $\Psi^{\lambda_{k+m}(N)}$ has a compact symmetric subset $A_0$ with
\[
i(A_0) = k + m.
\]
We take $B_0 = \Psi_{\lambda_{k+1}(N)}$, so that
\[
i(S \setminus B_0) = k
\]
by \eqref{1.7}. Let $R > r > 0$ and let $A$, $B$ and $X$ be as in Theorem \ref{Theorem 2.4}. As in the proof of Theorem \ref{Theorem 1.1}, $\inf \Phi(B) > 0$ if $r$ is sufficiently small. Since $A_0 \subset \Psi^{\lambda_{k+1}(N)}$, there exists $R > r$ such that $\Phi \le 0$ on $A$ by Lemma \ref{Lemma 3.3}. Since $e^t \ge 1 + t$ for all $t \ge 0$,
\[
F(t) \ge \frac{|t|^N}{N} + \frac{|t|^{N+N'}}{N + N'} \quad \forall t \in \R,
\]
so for $u \in X$,
\begin{eqnarray*}
\Phi(u) & \le & \int_\Omega \left[\frac{1}{N}\, |\nabla u|^N - \frac{\lambda}{N}\, |u|^N - \frac{\lambda}{N + N'}\, |u|^{N+N'}\right] dx\\[10pt]
& \le & \frac{\lambda_{k+1}(N) - \lambda}{N} \int_\Omega |u|^N\, dx - \frac{\lambda_k(N)}{(N + N') \vol{\Omega}^{1/(N-1)}} \left(\int_\Omega |u|^N\, dx\right)^{N'}\\[10pt]
& \le & \sup_{\rho \ge 0}\, \left[\frac{(\lambda_{k+1}(N) - \lambda)\, \rho}{N} - \frac{\lambda_k(N)\, \rho^{N'}}{(N + N') \vol{\Omega}^{1/(N-1)}}\right]\\[10pt]
& = & \frac{(\lambda_{k+1}(N) - \lambda)^N \vol{\Omega}}{N^2\, \lambda_k(N)^{N-1}}.
\end{eqnarray*}
So
\[
\sup \Phi(X) \le \frac{(\lambda_{k+1}(N) - \lambda)^N \vol{\Omega}}{N^2\, \lambda_k(N)^{N-1}} < \frac{\alpha_N^{N-1}}{N}
\]
by \eqref{1.8}. Thus, problem \eqref{1.4} has $m$ distinct pairs of nontrivial solutions $\pm\, u^\lambda_j,\, j = 1,\dots,m$ such that
\begin{equation} \label{4.4}
0 < \Phi(u^\lambda_j) \le \frac{(\lambda_{k+1}(N) - \lambda)^N \vol{\Omega}}{N^2\, \lambda_k(N)^{N-1}}.
\end{equation}

To prove that $u^\lambda_j \to 0$ as $\lambda \nearrow \lambda_{k+1}(N)$, it suffices to show that for every sequence $\nu_n \nearrow \lambda_{k+1}(N)$, a subsequence of $v_n := u^{\nu_n}_j$ converges to zero. We have
\begin{equation} \label{4.5}
\Phi(v_n) = \int_\Omega \left[\frac{1}{N}\, |\nabla v_n|^N - \nu_n\, F(v_n)\right] dx \to 0
\end{equation}
by \eqref{4.4} and
\begin{equation} \label{4.6}
\Phi'(v_n)\, v_n = \int_\Omega \left[|\nabla v_n|^N - \nu_n\, |v_n|^N e^{\, |v_n|^{N'}}\right] dx = 0.
\end{equation}
By \eqref{4.2}, \eqref{4.5}, and \eqref{4.6},
\[
\frac{1}{N^2} \int_\Omega |v_n|^{N+N'}\, dx \le \int_\Omega \left[\frac{1}{N}\, |v_n|^N e^{\, |v_n|^{N'}} - F(v_n)\right] dx = \frac{\Phi(v_n)}{\nu_n} \le \frac{\Phi(v_n)}{\lambda_k(N)} \to 0,
\]
so $v_n \to 0$ a.e.\! in $\Omega$ for a renamed subsequence. By \eqref{4.3},
\begin{equation} \label{4.7}
N' \int_\Omega F(v_n)\, dx \le \int_\Omega |v_n|^{N-N'} e^{\, |v_n|^{N'}} dx =: I_1 + I_2,
\end{equation}
where
\begin{equation} \label{4.8}
I_1 = \int_{\set{|v_n| > (2N/N')^{1/N'}}} |v_n|^{N-N'} e^{\, |v_n|^{N'}} dx \le \frac{N'}{2N} \int_\Omega |v_n|^N e^{\, |v_n|^{N'}} dx = \frac{N'}{2N \nu_n}\, \norm{v_n}^N
\end{equation}
by \eqref{4.6} and
\begin{equation} \label{4.9}
I_2 = \int_\Omega \goodchi_{\set{|v_n| \le (2N/N')^{1/N'}}}\!(x)\, |v_n|^{N-N'} e^{\, |v_n|^{N'}} dx \to 0
\end{equation}
by the Lebesgue dominated convergence theorem. Combining \eqref{4.5}, \eqref{4.7}, and \eqref{4.8} gives
\[
\frac{1}{2N}\, \norm{v_n}^N \le \Phi(v_n) + \frac{\lambda_{k+1}(N)}{N'}\, I_2 \to 0
\]
by \eqref{4.5} and \eqref{4.9}.
\end{proof}

\def\cdprime{$''$}

\end{document}